\newtheorem{Th}{Theorem}
\newtheorem{Prop}{Proposition}
\theoremstyle{definition}
\newtheorem{Def}{Definition}
\newtheorem{Rem}{Remark}
\newcommand{\aut}{\mathrm{Aut}}
\newcommand{\supp}{\mathrm{supp}}
\newcommand{\sym}{\mathrm{Sym}}
\newcommand{\rist}{\mathrm{rist}}
\newcommand{\dd}{\mathrm{d}}
\newcommand{\eg}{\text{e.g.\,}}
\newcommand{\etc}{\text{etc.}}
\numberwithin{equation}{section}
\begin{document}
\title{On irreducibility of Koopman representations of Higman-Thompson groups.}
\author{ {\bf Artem Dudko}  \\
                    Stony Brook University, Stony Brook, NY, USA  \\
          artem.dudko@stonybrook.edu
         }

\date{}

\maketitle
\begin{abstract} We introduce a notion of measure contracting actions and show that Koopman representations corresponding to ergodic measure contracting actions are irreducible. As a corollary we obtain that Koopman representations associated to canonical actions of Higman-Thompson groups are irreducible. We also show that the actions of weakly branch groups on the boundaries of rooted trees are measure contracting. This gives a new point of view on irreducibility of the corresponding Koopman representations.
\end{abstract}
\section{Introduction}
One of the most natural representations that one can associate to a measure class preserving action of a group $G$ on a measure space $(X,\mu)$, where $\mu$ is a probabilty measure, is the Koopman representation $\kappa$ of $G$ in $L^2(X,\mu)$ defined by:$$(\kappa(g)f)(x)=\sqrt{\frac{\dd\mu(g^{-1}(x))}{\dd\mu(x)}}f(g^{-1}x).$$ This representation is important due to the fact that the spectral properties of $\kappa$ reflect the measure-theoretic and dynamical properties of the action such as ergodicity and weak-mixing.

 It is known that for an ergodic action operators $\kappa(g)$ together with operators of multiplication by functions from $L^\infty(X,\mu)$ generate the algebra of all bounded operators on $L^2(X,\mu)$. 
 A natural question is whether the operators $\kappa(g),g\in G$ generate the algebra of all bounded operator by themselves, that is whether $\kappa$ is irreducible. Below are several examples of group actions with quasi-invariant measures for which the Koopman representation is known to be irreducible:
\begin{itemize}
\item{} actions of free non-commutative  groups on their boundaries (\cite{FTP83} and \cite{FTS94});
\item{} actions of lattices of Lie-groups (or algebraic-groups) on their Poisson-Furstenberg boundaries (\cite{CS91} and \cite{BC02});
\item{} action of the fundamental group of a compact negatively curved manifold on its boundary endowed with the Paterson-Sullivan measure class (\cite{BM11});
\item{} natural actions of Thompson's groups $F$ and $T$ on the unit segment (\cite{Garn12});
\item{} action of the group of compactly supported contactomorphisms of a contact manifold (\cite{Garn14});
\item{} actions of weakly branch groups on the boundaries of the corresponding rooted trees (\cite{DuGr-Koop}).
\end{itemize}
However, the general question in what cases the Koopman representation is irreducible remains open.

In the present paper we introduce a notion of a \emph{measure contracting action} and show that Koopman representations corresponding to ergodic measure contracting actions are irreducible. Using the above we show that Koopman representations corresponding to natural actions of Higman-Thompson groups are irreducible and reconsider the Koopman representations of weakly branch groups presented in \cite{DuGr-Koop}. We also notice that the measure contracting property applies to other interesting group actions. For example, \L{}ukasz Garncarek pointed to the author that the results of the present paper can be used to prove irreducibility of the Koopman representation of the group of inner automorphims of a foliation.

Apparently, the most famous group from the family of Higman-Thompson groups is  the Thompson group $F_{2,1}$ consisting of all piecewise linear continuous transformations of the unit interval with singularities at the points $\{\tfrac{p}{2^q} : p,q\in\mathbb N\}$ and slopes in $\{2^q : q\in \mathbb Z\}$. This group satisfies a number of unusual properties and disproves several important conjectures in group theory. The group $F_{2,1}$ is infinite but finitely presented, is not elementary amenable, has exponential growth, and does not contain a subgroup isomorphic to the free group of rank 2. An important open question is whether the group $F_{2,1}$ is amenable. Further discussion of historical importance of Higman-Thompson groups and their various algebraic properties can be found  in \cite{Brown:1987}, \cite{CFP:1996}, and \cite{BS14}. Each of the groups $G_{n,r}$ and $F_{n,r}$ acts canonically on $[0,r]$. Denote by $\lambda_r$ the Lebesgue probability measure on $[0,r]$. We show the following:
\begin{Th}\label{ThHT} Let $G$ be a group from the Higman-Thompson families $\{F_{n,r}\}$, $ \{G_{n,r}\}$. Then the Koopman representation of $G$ corresponding to the canonical action of $G$ on $([0,r],\lambda_r)$ is irreducible.\end{Th}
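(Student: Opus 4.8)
The plan is to deduce Theorem \ref{ThHT} directly from the main result promised in the abstract, namely that Koopman representations of ergodic measure contracting actions are irreducible. So the proof reduces to two verifications: that the canonical action of each Higman-Thompson group $G$ on $([0,r],\lambda_r)$ is ergodic, and that it is measure contracting in the sense introduced earlier in the paper.

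\begin{proof}
By the main theorem of the paper, it suffices to show that the canonical action of $G$ on $([0,r],\lambda_r)$ is ergodic and measure contracting. First I would establish ergodicity. The elements of $G$ act by piecewise-linear (or, in the $G_{n,r}$ case, piecewise-linear on a partition into standard dyadic-type intervals) homeomorphisms whose breakpoints lie on the $n$-adic rationals and whose slopes are powers of $n$. The key structural fact is that $G$ acts transitively, and in fact highly transitively, on the standard $n$-adic subintervals of $[0,r]$: given any two such intervals $I,J$ of the same type one can find $g\in G$ mapping $I$ affinely onto $J$. Hence any $G$-invariant measurable set $A$ has the property that its density on all standard intervals of a fixed generation agree up to the Jacobian correction, and by a Lebesgue density-point argument one concludes $\lambda_r(A)\in\{0,1\}$.

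The heart of the matter is verifying the measure contracting property. Here the plan is to exploit precisely the self-similar structure that makes these groups interesting: for any standard $n$-adic interval $I\subset[0,r]$ there is an element $g\in G$ that maps $[0,r]$ (or a cofinite portion of it) onto $I$, contracting the complement of $I$ into an arbitrarily small set while acting affinely on a large piece. Concretely, I would produce, for each target interval $I$ and each $\varepsilon>0$, a group element $g$ supported so that $g$ carries most of the mass of $\lambda_r$ into $I$; the Radon-Nikodym cocycle of such a $g$ is locally constant (a ratio of powers of $n$), which lets one control the Koopman action on indicator functions. This is exactly the kind of ``zooming in'' that the measure contracting condition is designed to capture, and the explicit piecewise-linear generators of the Higman-Thompson groups make the required elements easy to write down.

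The main obstacle I anticipate is matching the concrete dynamics to the \emph{precise} quantitative form of the measure contracting definition given earlier in the paper, which I am treating as a black box here. In particular, one must check the exact uniformity and the role of the Radon-Nikodym derivative that enters through the factor $\sqrt{\dd\mu(g^{-1}x)/\dd\mu(x)}$ in the Koopman formula: the contraction must be compatible with this cocycle so that the rescaled functions $\kappa(g)f$ concentrate as required. The two families $\{F_{n,r}\}$ and $\{G_{n,r}\}$ are handled uniformly, since $F_{n,r}\leqslant G_{n,r}$ and both contain the affine ``prefix-replacement'' elements used above; the smaller group $F_{n,r}$ already supplies enough contracting elements, so no separate argument is needed for $G_{n,r}$. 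Once ergodicity and the measure contracting property are in hand, irreducibility follows immediately from the general theorem.
\end{proof}
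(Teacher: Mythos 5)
Your overall strategy coincides with the paper's: establish that the canonical action is ergodic and measure contracting, then invoke the general irreducibility theorem (Theorem \ref{ThKoopContr}). Your ergodicity sketch also matches the paper's proof of Proposition \ref{PropThompsonErg} (transitivity of $G$ on standard $n$-adic segments, which the paper gets from Corollary A5.6 of \cite{BS14}, followed by an approximation argument). But there is a genuine gap in precisely the part you call the heart of the matter: you never verify the measure contracting property, and the elements you propose to use are of the wrong shape for it.

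Definition \ref{DefMeasContr} quantifies over \emph{every} measurable subset $A\subset[0,r]$ and demands an element $g$ satisfying two conditions simultaneously: $(1)$ $\lambda_r(\supp(g)\setminus A)<\epsilon$, i.e.\ $g$ must essentially fix the complement of $A$ pointwise, and $(2)$ $\sqrt{\dd\lambda_r(g(x))/\dd\lambda_r(x)}<M^{-1}$ on all of $A$ except a set of measure at most $\epsilon$. The ``zooming'' elements you describe --- elements carrying most of the mass of $[0,r]$ into a small interval $I$ --- are supported on essentially the whole of $[0,r]$, so they violate condition $(1)$ for every proper subset $A$; they work only in the single case $A=[0,r]$. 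What the paper actually does, and what your sketch is missing, is a localization step: it writes down one explicit contracting element $g_m\in F_{n,r}$ (identity near $0$, expanding a tiny interval over most of $[0,r]$, contracting the rest with slope $n^{-m}$), conjugates it by the affine map $J_I$ onto each standard segment $I$ of the form \eqref{EqI} --- the conjugate, extended by the identity off $I$, still lies in $F_{n,r}$ because slopes remain powers of $n$ and breakpoints remain $n$-adic --- and then takes the product $g_m^A=g_m^{I_1}\cdots g_m^{I_k}$ over a partition of an admissible set $A$ into such segments, so that $\supp(g_m^A)\subset A$ while $g_m^A$ contracts all but a fraction $n^{-m}$ of $A$. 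Arbitrary measurable sets are then handled by approximating them by admissible sets. Without this localization-by-conjugation together with the partition and approximation arguments, the measure contracting property is not established, and the appeal to the general theorem has nothing to stand on; treating the central definition as a black box is exactly what prevents your outline from being a proof.
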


After the first version of the present paper was submitted to the arXiv \L{}ukasz Garncarek called the author's attention to \cite{Garn12} where irreducibility of a more general class of Koopman type representations (with Radon-Nikodym derivative twisted by a cocycle) of the Thompson groups $F=F_{1,2}$ and $T$ is proven. In fact,  Garncarek's methods readily adapt to Koopman representations of Higman-Thompson groups. Thus, Theorem \ref{ThHT} can be attributed to Garncarek. However, the proof obtained here is different.

In \cite{DM14 Thompson} the author of the present paper jointly with Medynets showed that Higman-Thompson groups have only discrete set of finite type factor-representations using the notion of a \emph{compressible action}. We notice that the notion of measure contracting actions we introduce in the present paper is loosely related to the notion of compressible actions.

The second class of representations we consider is Koopman representations of weakly branch groups. A group acting on a rooted tree $T$ is called weakly branch if it acts transitively on each level of the tree and for every vertex $v$ of $T$ it has a nontrivial element $g$ supported on the subtree $T_v$ emerging from $v$ (see \eg \cite{BGS03} and \cite{Grig11}). Weakly branch groups posses  interesting  and  often  unusual  properties. The class of weakly branch groups contains groups  of  intermediate  growth, amenable  but  not  elementary  amenable  groups,  groups  with  finite  commutator  width  \etc. Weakly branch groups  also  play  important  role  in  studies  in holomorphic  dynamics (see \cite{Nekr}) and  in the  theory  of  fractals  (see \cite{GNS15}).

For a $d$-regular rooted tree $T$ its boundary $\partial T$ can be identified with a space of sequences $\{x_j\}_{j\in\mathbb N}$ where $x_j\in\{1,\ldots,d\}$. For a collection of positive real numbers $p=\{p_1,\ldots,p_d\}$ with $p_1+\ldots+p_d=1$ let $\mu_p$ be the corresponding Bernoulli measure on $\partial T$. In \cite{DuGr-Koop} the authors showed the following:
 \begin{Th}\label{ThBranchIrred} Let  $G$ be a subexponentially bounded weakly branch group acting on a regular rooted tree and $p$ be as above such that $p_i$ are pairwise distinct. Then the Koopman representation associated to the action of $G$ on  $(\partial T,\mu_p)$ is irreducible.
\end{Th}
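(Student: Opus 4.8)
The plan is to show that the action of a subexponentially bounded weakly branch group on the boundary of its rooted tree is measure contracting, and then invoke the main theorem of the paper (that Koopman representations of ergodic measure contracting actions are irreducible). This reduces the proof to two ingredients: ergodicity of the action on $(\partial T,\mu_p)$, and verification of the measure contracting property.

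Let me sketch each part. Ergodicity should follow from the branching/level-transitivity structure: since $G$ acts transitively on each level of the tree, the $G$-orbit of a generic point is dense, and a standard argument with the Bernoulli measure $\mu_p$ shows that any $G$-invariant measurable set has measure $0$ or $1$. For the measure contracting property, the key geometric idea is that elements of a weakly branch group can be chosen to be supported on arbitrarily deep subtrees $T_v$. The cylinder set determined by a vertex $v$ at level $n$ has $\mu_p$-measure equal to a product $p_{x_1}\cdots p_{x_n}$, and because the $p_i$ are pairwise distinct, different cylinders at the same level generically have different measures. The plan is to exploit this: given a small set, I would use nontrivial elements supported deep in the tree, together with rearrangements coming from level-transitivity, to push the set into a region where the Radon--Nikodym cocycle forces the measure to contract by a definite factor. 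The subexponential boundedness hypothesis enters to control the number and complexity of the tree automorphisms involved, ensuring the contraction estimates are uniform rather than being overwhelmed by exponential growth of the relevant portions of the group.

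I expect the main obstacle to be the precise verification that the action satisfies whatever the formal definition of ``measure contracting'' turns out to be, since this definition (introduced earlier in the paper) presumably requires producing, for every positive-measure set and every $\epsilon$, group elements that contract the set's measure in a controlled way while respecting the Radon--Nikodym derivatives. Translating the combinatorial branching data into such analytic contraction estimates is the technical heart: one must track how the Bernoulli weights $p_{x_1}\cdots p_{x_n}$ transform under the chosen tree automorphisms, and the pairwise distinctness of the $p_i$ is exactly what prevents the cocycle from being trivial and allows genuine contraction. The subexponential growth condition is what I would expect to be needed to keep these estimates from degenerating, so balancing the depth of the supporting subtrees against the growth of the group is the delicate step.

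Finally, I would assemble the pieces: having established ergodicity and the measure contracting property, the irreducibility of the Koopman representation follows immediately from the earlier general theorem. Since the original proof in \cite{DuGr-Koop} presumably proceeds by direct spectral or operator-algebraic arguments, the value of this approach is conceptual rather than merely re-deriving the result: it places the irreducibility within the unified framework of measure contracting actions, exactly as the introduction promises when it says the paper gives ``a new point of view'' on these representations.
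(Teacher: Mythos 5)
Your high-level strategy is exactly the paper's: prove the action is measure contracting, note ergodicity, and invoke Theorem \ref{ThKoopContr}. But what you have written is a plan, not a proof --- you yourself flag that ``the precise verification that the action satisfies ... the formal definition of measure contracting'' is ``the technical heart,'' and that verification is never carried out. This is a genuine gap, and it is not a routine one: your proposed mechanism (use elements supported on deep subtrees plus level-transitive rearrangements to ``push the set into a region where the Radon--Nikodym cocycle forces contraction'') is unlikely to work as stated, because for a weakly branch group the rigid stabilizers are only assumed \emph{nontrivial}; you have essentially no control over which elements exist inside a given subtree, so you cannot explicitly build contracting elements the way one does for Higman--Thompson groups. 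The paper avoids this by importing from \cite{DuGr-Koop} (Corollary 3 and Lemma 4) the statement recorded here as Proposition \ref{PropGn}: for every clopen $A$ there exist $g_n\in G$ with $\supp(g_n)\subset A$ and $(\kappa_p(g_n)\xi_A,\xi_A)\to 0$. The missing key lemma is the conversion of this matrix-coefficient decay into condition $2)$ of Definition \ref{DefMeasContr}: if the set $B_n\subset A$ where $\sqrt{\dd\mu_p(g_n(x))/\dd\mu_p(x)}>M^{-1}$ had measure at least $\epsilon$ for all $n$, then, since $\supp(g_n)\subset A$ forces $g_nx\in A$ for $x\in A$, one would get
\begin{equation*}
(\kappa_p(g_n)\xi_A,\xi_A)\;\geqslant\;\int\limits_{B_n}\sqrt{\tfrac{\dd\mu_p(g_n(x))}{\dd\mu_p(x)}}\,\dd\mu_p(x)\;\geqslant\; M^{-1}\epsilon,
\end{equation*}
contradicting the decay. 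Without this bridge (or a substitute for it), your argument does not get off the ground.

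Two smaller but real issues. First, you misidentify the role of subexponential boundedness: its primary function (Proposition 2 of \cite{DuGr-Koop}) is to guarantee that $\mu_p$ is \emph{quasi-invariant} under the action at all, so that the Koopman representation is even defined; it is not merely a device for keeping contraction estimates uniform. Second, your ergodicity paragraph is only a gesture (``a standard argument''); for the non-uniform Bernoulli measure $\mu_p$, which is only quasi-invariant, ergodicity does require an argument and is likewise taken from \cite{DuGr-Koop} rather than being an immediate consequence of level-transitivity plus density of orbits.
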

\noindent Here subexponentially bounded group means a group consisting of subexponentially bounded (in the sense similar to polynomial boundedness of Sidki \cite{Sid00}) automorphisms of $T$. In the present paper using the results of \cite{DuGr-Koop} we show that actions of subexponentially bounded weakly branch groups on $(\partial T,\mu_p)$ (with $p_i$ pairwise distinct) are measure contracting. This gives a different view on the proof of Theorem \ref{ThBranchIrred} presented in \cite{DuGr-Koop}.

\subsection*{Acknowledgements.} The author acknowledges \L{}ukazs Garncarek for important comments and essential references. The author is grateful to Rostislav Grigorchuk for valuable remarks and useful suggestions.

\section{Measure contracting actions.}
\begin{Def}\label{DefMeasContr} Let $G$ act on a probability space $(X,\mu)$ with a quasi-invariant measure $\mu$. We will call this action \emph{measure contracting} if for every measurable subset $A\subset X$ and any $M,\epsilon>0$ there exists $g\in G$ such that
\begin{itemize}
\item[$1)$] $\mu(\supp(g)\setminus A)<\epsilon$;
\item[$2)$] $\mu(\{x\in A:\sqrt{\tfrac{\dd\mu(g(x))}{\dd\mu(x)}}<M^{-1}\})>\mu(A)-\epsilon$.
\end{itemize}
\end{Def}
\noindent Here $\supp(g)=\{x\in X:gx\neq x\}$. For a measure space $(X,\mu)$ denote by $\mathcal L(X,\mu)$ the von Neumann algebra generated by operators of multiplication by functions from $L^\infty(X,\mu)$ on $L^2(X,\mu)$. Let $\mathcal B(X,\mu)$ be the algebra of all bounded linear operators on $L^2(X,\mu)$. For a set of operators $\mathcal S\subset \mathcal B(X,\mu)$ denote by $$\mathcal S'=\{A\in \mathcal B(X,\mu):AB=BA\;\;\text{for all}\;\;B\in \mathcal S\}$$ the commutant of $\mathcal S$.
The following result is folklore.
\begin{Th}\label{ThGen} Let group $G$ act ergodically by measure class preserving transformations on a standard Borel space $(X,\mu)$. Then the von Neumann algebra $\widetilde{\mathcal M}_\kappa$ generated by operators from $\mathcal M_\kappa$ and  $\mathcal L^\infty(X,\mu)$ coincides with $\mathcal B(X,\mu)$.
\end{Th}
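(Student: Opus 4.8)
The plan is to prove the statement by showing that the commutant of the generating set is trivial and then invoking the von Neumann bicommutant theorem. Write $\mathcal{S}$ for the self-adjoint collection consisting of all Koopman operators $\kappa(g)$, $g\in G$ (which generate $\mathcal{M}_\kappa$), together with all multiplication operators $M_f$, $f\in L^\infty(X,\mu)$ (which generate the multiplication algebra $\mathcal{L}(X,\mu)$). Since $\mathcal{S}$ contains the identity and is closed under adjoints, because $\kappa(g)^*=\kappa(g)^{-1}=\kappa(g^{-1})$ and $M_f^*=M_{\bar f}$, the bicommutant theorem gives $\widetilde{\mathcal{M}}_\kappa=\mathcal{S}''$. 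Hence it suffices to prove $\mathcal{S}'=\mathbb{C}\cdot\id$.

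So let $T\in\mathcal{S}'$. First I would use the classical fact that, for a standard Borel space with a finite (more generally $\sigma$-finite) measure, the multiplication algebra $\mathcal{L}(X,\mu)$ is maximal abelian in $\mathcal{B}(X,\mu)$, that is $\mathcal{L}(X,\mu)'=\mathcal{L}(X,\mu)$. Since $T$ commutes in particular with every $M_f$, this forces $T=M_h$ for some $h\in L^\infty(X,\mu)$.

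Next I would extract invariance of $h$ from commutation with the Koopman operators. A direct computation from the definition of $\kappa$ shows that for every $g\in G$ and every $f\in L^2(X,\mu)$,
\[
(M_h\kappa(g)f)(x)=h(x)\sqrt{\tfrac{\dd\mu(g^{-1}x)}{\dd\mu(x)}}\,f(g^{-1}x),\qquad (\kappa(g)M_hf)(x)=h(g^{-1}x)\sqrt{\tfrac{\dd\mu(g^{-1}x)}{\dd\mu(x)}}\,f(g^{-1}x).
\]
Equality of these two expressions for all $f$ (tested, say, on indicator functions of sets of positive measure) yields $h(x)=h(g^{-1}x)$ for $\mu$-almost every $x$. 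Thus $h$ is $G$-invariant modulo null sets, and ergodicity of the action forces $h$ to be constant $\mu$-a.e. Hence $T=c\cdot\id$, so $\mathcal{S}'=\mathbb{C}\cdot\id$, which completes the argument.

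The routine identities above are harmless; the point that genuinely requires care is the maximal abelianness of $\mathcal{L}(X,\mu)$, since the entire reduction rests on it. For a standard Borel space with a finite measure this is classical, but I would state it explicitly and either cite a reference or recall the short argument that an operator commuting with all multiplication operators is itself a multiplication operator. A secondary technical point to phrase cleanly is the passage from the operator identity to the pointwise almost-everywhere relation $h(x)=h(g^{-1}x)$, which should be carried out by testing against a countable dense family of functions rather than manipulating $f$ pointwise.
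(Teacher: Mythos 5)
Your proposal is correct and follows essentially the same route as the paper's own proof: both reduce via the bicommutant theorem to showing the commutant is scalar, use maximal abelianness of $\mathcal L(X,\mu)$ (the paper cites Dixmier, Lemma 8.5.1) to identify a commuting operator as a multiplication operator $M_h$, and then deduce $G$-invariance of $h$ from commutation with the $\kappa(g)$ so that ergodicity forces $h$ to be constant. Your write-up is somewhat more explicit about the technical points (self-adjointness of the generating set, testing against dense families), but there is no substantive difference.
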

\begin{proof} By von Neumann bicommutant theorem (see \eg \cite{bratelli_robinson:1982}, Theorem 2.4.11) it is sufficient to show that the commutant $\widetilde{\mathcal M}_\kappa'$ consists only from scalar operators. Let $A\in \widetilde{\mathcal M}_\kappa'$. Since $\mathcal L(X,\mu)$ is a maximal abelian subalgebra of $\mathcal B(X,\mu)$ (see \eg \cite{Dix69}, Lemma 8.5.1) we obtain that $A\in \mathcal L(X,\mu)$. That is, $A$ is the operator of multiplication by a function $m\in L^\infty(X,\mu)$. Since $A$ commute with $\kappa(g)$ for all $g\in G$ the function $m$ is $G$-invariant ($m(gx)=m(x)$ for all $g\in G$ for almost all $x\in X$). By ergodicity, $m$ is constant almost everywhere. Therefore, operator $A$ is scalar. This finishes the proof.
\end{proof}
The main result of this section is the following:
\begin{Th}\label{ThKoopContr} For any ergodic measure contracting action of a group $G$ on a probability space $(X,\mu)$ the associated Koopman representation $\kappa$ of $G$ is irreducible.
\end{Th}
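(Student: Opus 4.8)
The plan is to show that the commutant of the set of Koopman operators $\{\kappa(g):g\in G\}$ consists only of scalar operators, which is equivalent to irreducibility of $\kappa$. Since the algebra $\mathcal L(X,\mu)$ of multiplication operators is maximal abelian in $\mathcal B(X,\mu)$, it suffices to prove two statements: (i) every bounded operator $A$ commuting with all $\kappa(g)$ is itself a multiplication operator, i.e. $A\in\mathcal L(X,\mu)$; and (ii) any such multiplication operator is scalar. Statement (ii) is immediate from ergodicity, exactly as in the proof of Theorem \ref{ThGen}: if $A=M_m$ commutes with $\kappa(g)$ then, since $\kappa(g)M_m\kappa(g)^{-1}=M_{m\circ g^{-1}}$, the symbol $m$ is $G$-invariant and hence constant almost everywhere. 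All the work is therefore in (i), which is where the measure contracting hypothesis enters.

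To prove (i), recall that $A\in\mathcal L(X,\mu)$ if and only if $A$ commutes with every projection $P_E=M_{\mathbf 1_E}$, equivalently if and only if $A$ preserves each subspace $L^2(E,\mu)$. So I would fix a measurable set $E$ and a function $f$ supported on $E$, and show that $w:=P_{E^c}Af$, the part of $Af$ living on the complement $E^c$, vanishes. The mechanism is Definition \ref{DefMeasContr} applied to $E$: for large $M$ and small $\epsilon$ it produces $g=g_{M,\epsilon}$ whose support lies, up to measure $\epsilon$, inside $E$ (item $1)$) and which contracts $E$ so strongly that, off a subset of measure $\epsilon$, one has $\sqrt{\dd\mu(g(x))/\dd\mu(x)}<M^{-1}$ (item $2)$). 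Two consequences drive the argument. First, because $\supp(g)$ is essentially contained in $E$ and is $g$-invariant as a set, $\kappa(g)$ acts nearly trivially on $L^2(E^c,\mu)$, so $\|\kappa(g)w-w\|$ is controlled by $\|w\|_{L^2(\supp(g)\cap E^c)}\to 0$. Second, the contraction bound forces $\kappa(g)f$ to be concentrated on $g(\supp f)\subseteq gE$, whose ``good'' part $gG$ (coming from the subset $G\subseteq E$ where the bound holds) has measure at most $M^{-2}\mu(E)$.

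The key computation uses commutation in an essential way: from $A\kappa(g)=\kappa(g)A$ we get $\kappa(g)(Af)=A(\kappa(g)f)$. Projecting onto $E^c$ and pairing with an arbitrary $u\in L^2(E^c,\mu)$, the left-hand side converges to $\langle w,u\rangle$, using the first consequence for the $\kappa(g)w$ summand and the contraction for the $\kappa(g)(P_EAf)$ summand, which lands in a set of small measure. The right-hand side equals $\langle A(\kappa(g)f),u\rangle=\langle\kappa(g)f,A^{*}u\rangle$. Splitting $f$ into its restrictions to the good set $G$ and the bad set $B$ (of measure $<\epsilon$), the bad part is bounded by $\|f\|_{L^2(B)}\|u\|\to 0$ since $f\in L^2$, while the good part is supported on $gG$ and is bounded by $\|f\|\,\|A^{*}u\|_{L^2(gG)}\to 0$ because $\mu(gG)\le M^{-2}\mu(E)\to 0$ and $A^{*}u\in L^2$. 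Letting $M\to\infty$ and $\epsilon\to 0$ yields $\langle w,u\rangle=0$ for all $u\in L^2(E^c,\mu)$; taking $u=w$ gives $w=0$. Hence $A$ preserves $L^2(E,\mu)$ for every $E$, establishing (i).

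The main obstacle, and the only place where measure contraction is genuinely needed, is the estimate $\langle A(\kappa(g)f),u\rangle\to 0$: it rests on the fact that a strongly contracting $g$ pushes $f$ onto a set of arbitrarily small measure, so that the a priori merely bounded vector $A(\kappa(g)f)$ has vanishing inner product with any fixed $u$ via Cauchy--Schwarz against $A^{*}u$ restricted to that shrinking set. Item $1)$ is what keeps $\kappa(g)$ from disturbing the complement $E^c$, so that the left-hand side genuinely recovers $w$ rather than some mixture. Balancing the two parameters $\epsilon$ (controlling the support error and the bad set) and $M$ (controlling the contraction), together with the two-sided splitting of $f$, is the delicate bookkeeping, but it is routine once the setup above is in place.
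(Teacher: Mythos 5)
Your proof is correct, but it is organized dually to the paper's, so a comparison is worthwhile. The paper's key lemma is an operator-convergence statement: for each measurable set $E$ the contracting elements $g_m^E$ satisfy $\kappa(g_m^E)\to P^E$ in the weak operator topology, where $P^E$ is the projection onto $\{\eta\in L^2(X,\mu):\supp(\eta)\subset X\setminus E\}$; hence every such projection, and then all of $\mathcal L(X,\mu)$, lies in the von Neumann algebra $\mathcal M_\kappa$ generated by the representation, and Theorem \ref{ThGen} (which rests on the von Neumann bicommutant theorem) finishes the argument. You never prove any operator convergence; instead you take an arbitrary element $A$ of the commutant and show directly, by pairing $\kappa(g)(Af)=A(\kappa(g)f)$ against $u\in L^2(E^c,\mu)$, that $P_{E^c}AP_E=0$ for every $E$, so that $A$ commutes with all projections $P_E$; maximal abelianness of $\mathcal L(X,\mu)$ and ergodicity then make $A$ scalar. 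The analytic core is identical in both arguments --- the two consequences of measure contraction (near-trivial action of $\kappa(g)$ on $L^2(E^c,\mu)$, and the fact that $g$ pushes the good part of $E$ onto a set of measure at most $M^{-2}$) --- and in fact your two limits are exactly the paper's weak convergence tested on the pairs $(Af,u)$ and $(f,A^*u)$. What your route buys: it bypasses Theorem \ref{ThGen} and the bicommutant theorem entirely, needing only the elementary direction of Schur's lemma, and by using Cauchy--Schwarz together with absolute continuity of $\int|\cdot|^2\,\dd\mu$ over small sets it also avoids the paper's reduction to essentially bounded $\eta_1,\eta_2$. What the paper's route buys: the stronger intermediate conclusion $P^E\in\mathcal M_\kappa$, i.e.\ multiplication operators are weak limits of the $\kappa(g)$'s themselves, which is more information than irreducibility alone. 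One small point you elide (as does the paper, implicitly): on the complement of $\supp(g)$ one needs $\dd\mu(g^{-1}x)/\dd\mu(x)=1$ almost everywhere, so that $\kappa(g)$ genuinely fixes functions supported off $\supp(g)$; this is standard for quasi-invariant measures but deserves a sentence in a written-up version.
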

\begin{proof} First, for every measurable subset $A\subset X$ fix a sequence of elements $g_m^A$ such that
\begin{itemize}
\item[$1)$] $\mu(\supp(g_m^A)\setminus A)<\tfrac{1}{m}$,
\item[$2)$] $\mu(\{x\in A:\sqrt{\tfrac{\dd\mu(g_m^A(x))}{\dd\mu(x)}}<\tfrac{1}{m}\})>\mu(A)-\tfrac{1}{m}$
\end{itemize} for every $m\in\mathbb N$.
 For a subset $B\subset X$ denote by $P^B$ the orhtogonal projection onto the subspace
 $$\mathcal H^B=\{\eta\in L^2(X,\mu):\supp(\eta)\subset X\setminus B\}.$$
Let us show that for every measurable subset $A\subset X$ one has \begin{equation}\label{EqProj}w-\lim\limits_{m\to\infty}\pi(g_m^A)=P^A,
\end{equation} where $w-\lim$ stands for the limit in the weak operator topology.

Fix a measurable subset $A\subset X$. Introduce the sets
$$A_m=\{x\in A:\sqrt{\tfrac{\dd\mu(g_m^A(x))}{\dd\mu(x)}}<\tfrac{1}{m}\},\;\;B_m=\supp(g_m^A)\setminus A_m.$$
Notice that $$\mu(A\setminus A_m)<\tfrac{1}{m}\;\; 
\text{and}\;\; \mu(B_m)<\tfrac{2}{m}.$$
To prove \eqref{EqProj} it is sufficient to show that for any $\eta_1,\eta_2\in L^2(X,\mu)$ one has
$$(\pi(g_m^A)\eta_1,\eta_2)\rightarrow (P^A\eta_1,\eta_2)$$ when $m\to\infty$. Since the subspace of essentially bounded functions $L^2(X,\mu)\cap L^\infty(X,\mu)$ is dense in $L^2(X,\mu)$ we can assume that $\eta_1$ and $\eta_2$ are essentially bounded. Let $M_i=\|\eta_i\|_\infty,i=1,2$. We have:
\begin{align*} |(\pi((g_m^A)^{-1})\eta_1,\eta_2)-(P^A\eta_1,\eta_2)|=\bigg|\int\limits_X \sqrt{\tfrac{\dd\mu(g_m^A(x))}{\dd\mu(x)}}\eta_1(g_m^Ax)\overline{\eta_2(x)}\dd x-\\
\int\limits_{X\setminus A}\eta_1(x)\overline{\eta_2(x)}\dd x\bigg|\leqslant \tfrac{1}{m}\bigg|\int\limits_{A_m}\eta_1(g_m^Ax)\overline{\eta_2(x)}\dd x\bigg|+\\
\bigg|\int\limits_{B_m}\sqrt{\tfrac{\dd\mu(g_m^A(x))}{\dd\mu(x)}}
\eta_1(g_m^Ax)\overline{\eta_2(x)}\dd x\bigg|+\bigg|\int\limits_{B_m}
\eta_1(x)\overline{\eta_2(x)}\dd x\bigg|\leqslant \\ \tfrac{1}{m}M_1M_2+|(\pi((g_m^A)^{-1})\eta_1,P^{X\setminus B_m}\eta_2)|+\tfrac{2}{\sqrt m}M_1M_2\rightarrow 0
\end{align*} when $m\to\infty$, since $\|\pi((g_m^A)^{-1})\eta_1\|=\|\eta_1\|$ (in the norm on $L^2(X,\mu)$) and $P^{X\setminus B_m}\eta_2\to 0$ when $m\to\infty$. Observe that for every $g\in G$ one has $(\pi(g^{-1})\eta_1,\eta_2)=\overline{(\pi(g)\eta_2,\eta_1)}$. This finishes the proof of \eqref{EqProj}. In particular, we obtain that $P^A\in\mathcal M_\kappa$ for every measurable subset $A\subset X$.

Observe that every function from $L^\infty(X,\mu)$ can be approximated arbitrarily well in $L^2$-norm by finite linear combinations of characteristic functions of open sets. This implies that for every $m\in L^\infty(X,\mu)$ the operator of multiplication by $m$
$$\mathcal H\to \mathcal H,\;\;f\to mf$$ can be approximated arbitrary well in the strong operator topology by finite linear combinations of projections $P_A\in\mathcal M_{\kappa}$, and thus belongs to the von Neumann algebra $\mathcal M_{\kappa}$ generated by operators $\kappa(g),g\in G$. It follows that $\mathcal M_{\kappa}$ contains the algebra $\mathcal L(X,\mu)$. Using Theorem \ref{ThGen} we obtain that $\mathcal M_\kappa$ coincides with $\mathcal B(X,\mu)$. This finishes the proof of Theorem \ref{ThKoopContr}.\end{proof}
\begin{Rem} Garncarek pointed to the author that the proof of Theorem \ref{ThKoopContr} works to show irreducibility of a more general class of Koopman type representations. Namely, representations of the form:
$$(\kappa_\gamma(g)f)(x)=\gamma(g^{-1},x)\sqrt{\frac{\dd\mu(g^{-1}(x))}{\dd\mu(x)}}f(g^{-1}x),$$ where $\gamma:G\times X\to \{z\in\mathbb C:|z|=1\}$ is any cocycle.
\end{Rem}
\section{Higman-Thompson groups}
Let us briefly recall the definition of Higman-Thompson groups. For details we refer the reader to \cite{Brown:1987}, \cite{CFP:1996}, and \cite{BS14}.
\begin{Def} Fix two positive integers $n$ and $r$. \\
$(1)$ The group $F_{n,r}$ consists of all orientation preserving piecewise linear homeomorphisms $h$ of $[0,r]$ such that all singularities of $h$ are in $\mathbb Z[1/n]=\{\tfrac{p}{n^k}:p,k\in\mathbb N\}$ and
the derivative of $h$ at any non-singular point is $n^k$ for some $k\in \mathbb Z$.\\
$(2)$ The group $G_{n,r}$ is the group of all right continuous piecewise linear bijections $h$ of $[0,r)$ with finitely many discontinuities  and singularities, all in $\mathbb Z[1/n]$, such that the derivative of $h$ at any non-singular point is $n^k$ for some $k\in \mathbb Z$ and $h$ maps $\mathbb Z[1/n]\cap [0,r)$   to itself.
\end{Def}
\begin{Prop}\label{PropThompsonErg} Let $G$ be a group from the Higman-Thompson families $\{F_{n,r}\}$, $ \{G_{n,r}\}$. Then the canonical action of $G$ on $([0,r],\lambda_r)$ is ergodic.
\end{Prop}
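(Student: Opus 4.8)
The plan is to combine the Lebesgue differentiation theorem along the $n$-adic filtration with a self-transitivity property of $G$ on standard $n$-adic intervals. Call an interval \emph{standard $n$-adic} if it has the form $[\tfrac{p}{n^k},\tfrac{p+1}{n^k}]$ with $p,k$ nonnegative integers, and let $\mathcal F_k$ denote the finite $\sigma$-algebra generated by the level-$k$ standard $n$-adic intervals that partition $[0,r]$. Suppose $A\subseteq[0,r]$ is $G$-invariant (that is, $\lambda_r(gA\triangle A)=0$ for all $g\in G$; this is well defined since the elements of $G$ preserve the Lebesgue measure class) and $\lambda_r(A)>0$. I want to show $\lambda_r(A)=r$, which is ergodicity.

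The only group-theoretic input I would need is the following transitivity statement: for any two standard $n$-adic intervals $I,J$ contained in the interior $(0,r)$ there is an element $g\in G$ mapping $I$ onto $J$ by the orientation-preserving affine map of slope $|J|/|I|=n^{k-l}$. For the families $\{G_{n,r}\}$ this is immediate, since these groups realize arbitrary prefix replacements and permutations of standard $n$-adic intervals. For $\{F_{n,r}\}$ one must additionally extend this affine map to a piecewise linear homeomorphism of $[0,r]$ fixing $0$ and $r$, by interpolating on the two complementary intervals $[0,\mathrm{left}(I)]\to[0,\mathrm{left}(J)]$ and $[\mathrm{right}(I),r]\to[\mathrm{right}(J),r]$ with breakpoints in $\mathbb Z[1/n]$ and slopes powers of $n$ (a subdivision into standard pieces is needed when the length ratios are not themselves powers of $n$). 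This is possible because all four endpoints lie in $\mathbb Z[1/n]\cap(0,r)$, and it is exactly the classical transitivity property of Thompson-type groups. Establishing (or precisely citing) this interpolation for the $F$-family is the step I expect to be the main obstacle; everything else is soft.

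The density computation is then the heart of the argument. If $g\in G$ maps $I$ onto $J$ affinely with slope $s$, then $\lambda_r(gE)=s\,\lambda_r(E)$ for every measurable $E\subseteq I$, while $\lambda_r(J)=s\,\lambda_r(I)$. Using $G$-invariance, $A\cap J=gA\cap gI=g(A\cap I)$ up to a null set, so the relative densities coincide:
\[
\frac{\lambda_r(A\cap J)}{\lambda_r(J)}=\frac{\lambda_r(g(A\cap I))}{s\,\lambda_r(I)}=\frac{s\,\lambda_r(A\cap I)}{s\,\lambda_r(I)}=\frac{\lambda_r(A\cap I)}{\lambda_r(I)}.
\]
In words, the $n$-adic density of an invariant set is the same on any two interior standard $n$-adic intervals lying in a common $G$-orbit, hence by transitivity on all of them.

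Finally I would fit the pieces together. By the martingale convergence theorem applied to the conditional expectations $\mathbb E[\mathbbm 1_A\mid\mathcal F_k]$ (equivalently, Lebesgue differentiation along the $n$-adic filtration), for almost every $x\in A$ the density of $A$ in the level-$k$ standard $n$-adic interval containing $x$ tends to $1$. Fixing $\delta>0$, I choose one such interior interval $I$ whose density exceeds $1-\delta$; the displayed identity then forces every interior standard $n$-adic interval $J$ to satisfy $\lambda_r(A\cap J)>(1-\delta)\lambda_r(J)$. Summing over the interior intervals of the level-$k$ partition, whose complement in $[0,r]$ has measure at most $2n^{-k}$ (coming from the two boundary intervals that $F_{n,r}$ cannot reach), yields $\lambda_r(A)\geq(1-\delta)(r-2n^{-k})$. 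Letting $k\to\infty$ and then $\delta\to0$ gives $\lambda_r(A)=r$. Thus every $G$-invariant measurable set is null or co-null, which is precisely ergodicity.
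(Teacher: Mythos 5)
Your overall strategy (transitivity of $G$ on a family of intervals, constancy of the relative density of an invariant set on that family, then a differentiation argument) is essentially the paper's, but the transitivity lemma you rely on is \emph{false} for $F_{n,r}$ when $n\geqslant 3$, and this is a genuine gap, not just a citation left to be filled in. Every $g\in F_{n,r}$ satisfies $g(x)\equiv x \pmod{(n-1)\mathbb Z[1/n]}$ for all $x\in\mathbb Z[1/n]$: writing $g(x)=\sum_i n^{j_i}(t_i-t_{i-1})$ over a partition $0=t_0<t_1<\dots<t_k=x$ by points of $\mathbb Z[1/n]$ on whose pieces $g$ is affine, and using that $n^{j}\equiv 1$ modulo $(n-1)\mathbb Z[1/n]$ for every $j\in\mathbb Z$, gives $g(x)\equiv\sum_i(t_i-t_{i-1})=x$. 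Consequently an element of $F_{n,r}$ can map $\big[\tfrac{p}{n^k},\tfrac{p+1}{n^k}\big]$ onto $\big[\tfrac{q}{n^l},\tfrac{q+1}{n^l}\big]$ only if $p\equiv q\pmod{n-1}$. Concretely, no element of $F_{3,1}$ maps $\big[\tfrac19,\tfrac29\big]$ onto $\big[\tfrac29,\tfrac13\big]$, because $\tfrac19\equiv 1$ while $\tfrac29\equiv 0$ modulo $2\mathbb Z[1/3]$. So ``all four endpoints lie in $\mathbb Z[1/n]\cap(0,r)$'' is not a sufficient condition, and no interpolation on the complementary intervals can exist; with only within-class transitivity, your density argument yields possibly different constants $d_0,\dots,d_{n-2}$ on the $n-1$ congruence classes, and the final summation over the full level-$k$ partition no longer goes through. (Your argument is fine for $n=2$, i.e.\ for Thompson's $F$, and for the groups $G_{n,r}$, where rearranging the complementary pieces removes the obstruction.)

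The paper sidesteps exactly this point: it runs the density argument not on all standard $n$-adic intervals but on the intervals $I=\big[\tfrac{(n-1)p}{n^m},\tfrac{(n-1)(p+1)}{n^m}\big]\subset(0,r)$, whose endpoints all lie in $(n-1)\mathbb Z[1/n]$, so the congruence obstruction vanishes; transitivity of $F_{n,r}$ on this family is precisely Corollary A5.6 of \cite{BS14}. (The paper also reduces $G_{n,r}$ to $F_{n,r}$ via $F_{n,r}<G_{n,r}$, and replaces your differentiation step by the simpler remark that finite unions of such intervals approximate every measurable set, so the ratio $\lambda_r(A\cap B)/\lambda_r(B)$ is independent of $B$; taking $B=A$ and $B=[0,r]$ gives a contradiction.) Your proof can be repaired in the same spirit, either by switching to the $(n-1)$-scaled intervals, or by fixing one congruence class $c$: the argument then shows that an invariant set $A$ of positive measure has full relative measure in every standard interval of class $c$, and since for a.e.\ $x$ the digit sums $x_1+\dots+x_k \bmod (n-1)$ of the base-$n$ expansion visit $c$ for infinitely many $k$, the intervals of class $c$ (over all levels) cover almost every point. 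But some such modification is required; as written, the proof fails for $F_{n,r}$ with $n\geqslant 3$.
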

\begin{proof} Since $F_{n,r}<G_{n,r}$ without loss of generality we can assume that $G=F_{n,r}$ for some $n,r$. Let $A$ be a $G$-invariant measurable subset of $[0,r]$ such that $0<\lambda_r(A)<1$. Denote by $\Lambda$ the set of segments $I\subset (0,r)$ of the form
$I=\big[\tfrac{(n-1)p}{n^m},\tfrac{(n-1)(p+1)}{n^m}\big],$ where $p,m\in \mathbb N$. Corollary A5.6 from \cite{BS14} implies that for any $I_1,I_2\in\Lambda$ there exists $g\in G$ which maps $I_1$ onto $I_2$. Replacing, if necessary, $g$ on $I_1$ by the linear orientation preserving map sending $I_1$ onto $I_2$ we may assume that $g'(x)$ is constant on $I_1$. $G$-invariance of $A$ implies that $$\frac{\lambda_r(A\cap I_1)}{\lambda_r(I_1)}=\frac{\lambda_r(A\cap I_2)}{\lambda_r(I_2)}.$$ It follows that
$\tfrac{\lambda_r(A\cap I)}{\lambda_r(I)}$ does not depend on $I\in\Lambda$. Since every measurable subset $B\subset [0,r]$ can be approximated by measure arbitrarily well by finite unions of segments from $\Lambda$ we obtain that the ratio $\frac{\lambda_r(A\cap B)}{\lambda_r(B)}$ is the same for all measurable subset $B\subset [0,r]$ with $\lambda_r(B)>0$. Taking $B=A$ and $B=[0,r]$ we arrive at a contradiction which finishes the proof.
\end{proof}
\begin{Prop}\label{PropThompsonCommens}  Let $G$ be a group from the Higman-Thompson families $\{F_{n,r}\}$, $ \{G_{n,r}\}$. Then the canonical action of $G$ on $([0,r],\lambda_r)$ is measure contracting.
\end{Prop}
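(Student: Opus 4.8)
The plan is to reduce to the continuous case and then to build, on each piece of a good finite approximation of $A$, a single group element whose slope is uniformly tiny on almost all of that piece. Since $F_{n,r}<G_{n,r}$ and every element I construct will in fact lie in $F_{n,r}$, I may assume $G=F_{n,r}$, exactly as in the proof of Proposition \ref{PropThompsonErg}. Recall that for $g\in F_{n,r}$ and Lebesgue measure the Radon--Nikodym derivative $\tfrac{\dd\mu(g(x))}{\dd\mu(x)}$ equals the slope $g'(x)$, which is a power of $n$; hence condition $2)$ of Definition \ref{DefMeasContr} asks that $g$ be strongly contracting (slope $<M^{-2}$) at almost every point of $A$, while condition $1)$ asks that $g$ be the identity off $A$ up to measure $\epsilon$. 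Fix $A$, $M$ and $\epsilon$, choose $N$ with $n^{-N}<M^{-2}$ (so that $\sqrt{n^{-N}}<M^{-1}$), and fix a small $\delta>0$ to be specified at the end.

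First I construct a contracting element supported in a single standard interval. Working on $[0,1]$, for large $j$ let $g$ be linear with slope $n^{-N}$ on $[0,1-n^{-j}]$, mapping this subinterval onto $[0,n^{-N}(1-n^{-j})]$, and let $g$ map the remaining interval $[1-n^{-j},1]$ onto $[n^{-N}(1-n^{-j}),1]$ by any orientation preserving piecewise linear bijection with breakpoints in $\mathbb Z[1/n]$ and slopes powers of $n$. Such a bijection between two intervals with endpoints in $\mathbb Z[1/n]$ always exists: both intervals split into finitely many standard base-$n$ intervals, which can be arranged in equal numbers and sent to one another by linear maps of slope a power of $n$ (the same pseudogroup transitivity underlying Corollary A5.6 of \cite{BS14}). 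By the very definition of $F_{n,1}$ the resulting homeomorphism $g$ lies in $F_{n,1}$, fixes $0$ and $1$, and has slope $n^{-N}<M^{-2}$ on the set $[0,1-n^{-j}]$ of measure $1-n^{-j}$. Transporting this affinely to an arbitrary standard interval $I=[\tfrac{p}{n^m},\tfrac{p+1}{n^m}]\subset[0,r]$ and extending by the identity produces $g_I\in F_{n,r}$ with $\supp(g_I)\subseteq I$ whose slope is $n^{-N}$ on a subset of $I$ of measure at least $(1-\delta)\lambda_r(I)$, provided $n^{-j}<\delta$.

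To finish, I approximate: choose a finite disjoint union $A'=\bigsqcup_{i=1}^s I_i$ of standard base-$n$ intervals with $\lambda_r(A\triangle A')<\tfrac{\epsilon}{2}$, possible because such finite unions are dense in measure. Set $g=\prod_{i=1}^s g_{I_i}$; the factors have disjoint supports, so $g\in F_{n,r}$, $\supp(g)\subseteq A'$, and the slope of $g$ equals $n^{-N}<M^{-2}$ on all of $A'$ except a subset of measure at most $\delta\lambda_r(A')$. Then $\supp(g)\setminus A\subseteq A'\setminus A$ gives condition $1)$, while the set of $x\in A$ with $\sqrt{\tfrac{\dd\mu(g(x))}{\dd\mu(x)}}<M^{-1}$ contains $A\cap A'$ minus the non-contracted part, hence has measure at least $\lambda_r(A)-\lambda_r(A\setminus A')-\delta\lambda_r(A')$; choosing $\delta$ small enough makes this exceed $\lambda_r(A)-\epsilon$, giving condition $2)$.

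The one step carrying the real content is the single-interval construction: one must contract almost the entire interval with a uniformly small slope while keeping the map inside the group and supported in the interval. The definition of $F_{n,r}$ as the group of \emph{all} such piecewise linear homeomorphisms makes membership automatic, so the only genuine input is the elementary fact that intervals with $\mathbb Z[1/n]$-endpoints are interchangeable within the group, the same transitivity underlying the ergodicity argument; everything else is measure-theoretic bookkeeping.
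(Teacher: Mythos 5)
Your overall strategy is the same as the paper's: build a strongly contracting element supported on a single standard base-$n$ interval, transport it affinely to the pieces of a partition of an admissible approximation $A'$ of $A$, multiply the (disjointly supported) pieces together, and finish with measure-theoretic bookkeeping. The paper's single-interval element is a fully explicit three-piece map $g_m$ (identity on $[0,r/n^{2m}]$, slope $n^m$ on $[r/n^{2m},r/n^m]$, slope $n^{-m}$ on $[r/n^m,r]$); yours is essentially its mirror image, with the contraction slope $n^{-N}$ and the leftover size $n^{-j}$ decoupled, and with the ``return map'' on the leftover interval obtained from an existence claim rather than written down. Your final estimates for conditions $1)$ and $2)$ of Definition \ref{DefMeasContr} are correct.

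The one genuine gap is in that existence claim. The assertion that a PL bijection with slopes in $n^{\mathbb Z}$ and breakpoints in $\mathbb Z[1/n]$ ``always exists'' between any two intervals with endpoints in $\mathbb Z[1/n]$ is false once $n\geqslant 3$: if $f\colon[a,b]\to[c,d]$ is such a map with slopes $n^{k_i}$ on pieces of lengths $\ell_i$, then $(d-c)-(b-a)=\sum_i(n^{k_i}-1)\ell_i\in(n-1)\mathbb Z[1/n]$, since $n^{k}-1\in(n-1)\mathbb Z[1/n]$ for every $k\in\mathbb Z$; for $n=3$ there is therefore no such map from $[0,1]$ onto $[0,2]$. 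For the same reason, two $\mathbb Z[1/n]$-intervals can be cut into \emph{equal} numbers of standard base-$n$ intervals only if their lengths agree modulo $(n-1)\mathbb Z[1/n]$: refining a partition replaces one standard interval by $n$ of them, so the count is only adjustable modulo $n-1$. (This invariant is precisely why the paper's ergodicity argument, Proposition \ref{PropThompsonErg}, restricts to intervals of length $(n-1)/n^m$ before invoking Corollary A5.6 of \cite{BS14}.) Your construction does survive, but only because the invariant happens to match and you never check it: the source $[1-n^{-j},1]$ has length $n^{-j}$, the target $[n^{-N}(1-n^{-j}),1]$ has length $1-n^{-N}+n^{-N-j}$, and their difference $(1-n^{-N})(1-n^{-j})$ lies in $(n-1)\mathbb Z[1/n]$ because $n-1$ divides $n^N-1$; with that congruence verified, your counting-and-refinement argument is valid. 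Alternatively you can avoid the issue entirely, as the paper does, by making the element explicit: for instance take $j=N=m$ and use the conjugate of the paper's $g_m$ by $x\mapsto r-x$ (or $g_m^{-1}$ itself), which is an explicit element of exactly your required form, so no appeal to transitivity is needed.
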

\begin{proof} Since $F_{n,r}<G_{n,r}$ it is sufficient to consider the case $G=F_{n,r}$. Introduce a sequence $g_m$ of elements of $G$ as follows:
\begin{equation}g_m(x)=\left\{\begin{array}{ll}x,&\text{if}\;\;0\leqslant x<\tfrac{r}{n^{2m}},\\
n^mx-\tfrac{r}{n^{m}}+\tfrac{r}{n^{2m}},&\text{if}\;\;\tfrac{r}{n^{2m}}\leqslant x<\tfrac{r}{n^{m}},\\
r-\tfrac{r}{n^m}+\tfrac{x}{n^m},&\text{if}\;\;\tfrac{r}{n^{m}}\leqslant x\leqslant r.
\end{array}\right.
\end{equation}
Observe that $\lambda_r(\{x\in [0,r]:\sqrt{\tfrac{\dd\lambda_r(g_m(x))}{\dd\lambda_r(x)}}\geqslant n^{-m}\})=n^{-m}$.

For any segment $I\subset [0,r]$ of the form \begin{equation}\label{EqI}I=\big[r\tfrac{p}{n^m},r\tfrac{p+1}{n^m}\big],\end{equation} where $m\in\mathbb N$ and $p\in \mathbb Z_+$, let $J_I$ be the unique increasing affine map sending $[0,r]$ onto $I$. Introduce elements $g_m^I\in F_{n,r}$ by
$$g_m^I(x)=\left\{\begin{array}{ll}J_Ig_mJ_I^{-1}(x),&\text{if}\;\;x\in I,\\x,&\text{otherwise}.\end{array}\right.$$
We will call a  set $A\subset [0,1]$ admissible if it is a finite union of segments of the form \eqref{EqI}. For an admissible set $A\subset [0,r]$ and any sufficiently large $m\in\mathbb N$ fix a partition $A=I_1\cup I_2\cup\ldots\cup I_k$, where each of $I_j$ is of the form \eqref{EqI} and $I_j$ intersect $I_l$ for $l\neq j$ by at most one point, and set
$g_m^A=g_m^{I_1}g_m^{I_2}\cdots g_m^{I_k}$. 
Clearly, one has $$ \supp(g_m^A)\subset A\;\;\text{and}\;\;\lambda_r(\{x\in A:\sqrt{\tfrac{\dd\lambda_r(g_m(x))}{\dd\lambda_r(x)}}\geqslant n^{-m}\})=n^{-m}\lambda_r(A).$$ Since any measurable subset of $[0,r]$ can be approximated by measure arbitrarily well by admissible sets the latter implies that the action of $G$ on $[0,r]$ is measure contracting.
\end{proof}
\noindent As a Corollary of Propositions \ref{PropThompsonErg} and \ref{PropThompsonCommens}, and Theorem \ref{ThKoopContr} we obtain Theorem \ref{ThHT}.
\section{Weakly branch groups.}
First, let us give a brief introduction to weakly branch groups. See \eg \cite{BGS03} and \cite{Grig11} for details.

A rooted tree is a tree $T$ with vertex set divided into levels $V_n$, $n\in\mathbb Z_+$, such that $V_0$ consists of one vertex $v_0$ (called the root of $T$), the edges are only between consecutive levels, and each vertex from $V_n$, $n\geqslant 1$ (we consider infinite trees), is connected by an edge to exactly one vertex from $V_{n-1}$ (and several vertexes from $V_{n+1}$). A rooted tree is called spherically homogeneous if each vertex from $V_n$ connected to the same number $d_n$ of vertexes from $V_{n+1}$. $T$ is called $d-$regular, if $d_n=d$ is the same for all levels. For a vertex $v$ of a rooted tree $T$ denote by $T_v$ the subtree emerging from $v$. The automorphism group $\aut(T)$ consists of all automorphisms of $T$ (as a graph) preserving the root.


\begin{Def}\label{DefBr} Let $T$ be a spherically homogeneous tree and $G<\aut(T)$. Rigid stabilizer of a vertex $v$  is the subgroup $\rist_v(G)$ consisting of element $g$ acting trivially outside of $T_v$.
 Rigid stabilizer of level $n$ is
$$\rist_n(G)=\prod\limits_{v\in V_n}\rist_v(G).$$ $G$ is called \emph{branch} if it is transitive on each level and $\rist_n(G)$ is a subgroup of finite index in $G$ for all $n$. $G$ is called \emph{weakly branch} if it is transitive on each level $V_n$ of $T$ and $\rist_v(G)$ is nontrivial for each $v$.
\end{Def}

The boundary $\partial T$ of a $d$-regular rooted tree is homeomorphic to a space of infinite sequences $\{1,2,\ldots,d\}^\mathbb{N}$ and hence is homeomorphic to a Cantor set. For a $d$-tuple $p=(p_1,\ldots,p_d)$ of such that \begin{equation}\label{EqP}p_i>0\;\;\text{for all}\;\;i\;\;\text{and}\;\;p_1+\ldots +p_d=1\end{equation} define a measure $\nu_p$ on $\{1,2,\ldots, d\}$ by $$\nu_p(\{1\})=p_1,\nu_p(\{2\})=p_2,\ldots,\nu_p(\{d\})=p_d.$$ Let $\mu_p=\nu_p^\mathbb N$ be the corresponding Bernoulli measure on $\partial T$.  For each level $V_n$ of a $d$-regular rooted tree an automorphism $g$ of $T$ can be presented in the form
$$g=\sigma\cdot(g_1,\ldots,g_{d^n}),$$ where $\sigma\in\sym(V_n)$ is a permutation of the vertexes from $V_n$ and $g_i$ are the restrictions of $g$ on the subtrees emerging from the vertexes of $V_n$. 
\begin{Def}\label{DefSubexp} We will call an element $g\in\aut(T)$ subexponentially bounded (in the sense similar to polynomial boundedness of Sidki \cite{Sid00}) if the numbers $k_n(g)$ of restrictions $g_i$ to the vertexes of level $n$ not equal to identity satisfy:
$$\lim\limits_{n\to\infty}k_n(g)\gamma^n=0\;\;\text{for any}\;\;0<\gamma<1.$$ A group $G<\aut(T)$ is subexponentially bounded if each $g\in G$ is subexponentially bounded.
\end{Def}
In \cite{DuGr-Koop}, Proposition 2 the authors showed that for any subexponentially bounded  automorphism $g$ and any $p$ as in \eqref{EqP} the measure $\mu_p$ is quasi-invariant with respect to the action of $g$.
For a subexponentially bounded weakly branch group  $G$ acting on a $d$-regular rooted tree and $p$ be as in \eqref{EqP} denote by $\kappa_p$ the Koopman representation corresponding to the action of
 $G$ on $(\partial T,\mu_p)$. Let $\xi_A\in L^2(\partial T,\mu_p)$ stand for the characteristic function of a measurable subset $A\subset \partial T$.
 From \cite{DuGr-Koop} (Corollary 3 and Lemma 4) we deduce the following:
 \begin{Prop}\label{PropGn} Let  $G$ be a subexponentially bounded weakly branch group acting on a $d$-regular rooted tree and $p$ be as in \eqref{EqP} with pairwise distinct $p_i$. Then for every clopen set $A\subset\partial T$ there exists a sequence of elements $g_n\in G$ with $\supp(g_n)\subset A$ such that:
 \begin{equation}\label{EqGn}\lim\limits_{n\to\infty} (\kappa_p(g_n)\xi_A,\xi_A)=0.\end{equation}
 \end{Prop}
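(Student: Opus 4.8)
The plan is to read the matrix coefficient $(\kappa_p(g)\xi_A,\xi_A)$ as an \emph{affinity} between two measures and then to strip $A$ down to the whole boundary by self-similarity. If $\supp(g)\subset A$ then $g$ preserves $A$ setwise, so $\xi_A(g^{-1}x)=\xi_A(x)$ and
\[
(\kappa_p(g)\xi_A,\xi_A)=\int_A\sqrt{\tfrac{\dd\mu_p(g^{-1}x)}{\dd\mu_p(x)}}\,\dd\mu_p(x),
\]
which is exactly the affinity (Bhattacharyya coefficient) between $\mu_p$ and its $g$-translate, restricted to $A$. By the Cauchy--Schwarz inequality (using $g^{-1}A=A$) this quantity is always at most $\mu_p(A)$, and it tends to $0$ precisely when the translate of $\mu_p$ becomes asymptotically mutually singular with $\mu_p$ on $A$. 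Thus the proposition asks for elements, supported in $A$, that drive these two measures apart.

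First I would reduce to a single cone. Since $A$ is clopen in the Cantor set $\partial T$, it is a finite disjoint union $A=\partial T_{v_1}\sqcup\cdots\sqcup\partial T_{v_k}$ of cones attached to pairwise incomparable vertices $v_1,\dots,v_k$ of a common level. If for each $i$ I can produce $g_n^{(i)}\in\rist_{v_i}(G)$ with $(\kappa_p(g_n^{(i)})\xi_{\partial T_{v_i}},\xi_{\partial T_{v_i}})\to0$, then setting $g_n=g_n^{(1)}\cdots g_n^{(k)}$ gives elements with $\supp(g_n)\subset A$ whose factors commute and have disjoint $g_n$-invariant supports $\partial T_{v_i}$. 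Separating diagonal from off-diagonal contributions then yields
\[
(\kappa_p(g_n)\xi_A,\xi_A)=\sum_{i=1}^k(\kappa_p(g_n^{(i)})\xi_{\partial T_{v_i}},\xi_{\partial T_{v_i}})\longrightarrow0,
\]
the off-diagonal terms vanishing because $\kappa_p(g_n)\xi_{\partial T_{v_i}}$ stays supported in $\partial T_{v_i}$ while the sum is finite. This is where Lemma~4 of \cite{DuGr-Koop} enters, as the local computation of the Radon--Nikodym cocycle on cones.

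Next I would handle a single cone by self-similarity. Under the canonical identification $\partial T_{v}\cong\partial T$ obtained by deleting the finite prefix leading to $v$, the normalized restriction of the Bernoulli measure $\mu_p$ is again $\mu_p$, and $\rist_v(G)$ acts as a group that still has nontrivial rigid stabilizers at every vertex and carries the same distinct weights $p_i$. Because the Radon--Nikodym derivative is unchanged under this identification while $\dd\mu_p|_{\partial T_v}=\mu_p(\partial T_v)\,\dd\mu_p$ on the image, the cone coefficient factors as $\mu_p(\partial T_v)$ times the corresponding whole-boundary coefficient for the induced action. It therefore suffices to treat the case $A=\partial T$, which is precisely the content I would import from Corollary~3 of \cite{DuGr-Koop}.

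The analytic heart, and the step I expect to be the main obstacle, is the whole-boundary contraction underlying Corollary~3. The mechanism is that, since the $p_i$ are pairwise distinct, applying a nontrivial permutation $\sigma$ to the children of a single vertex contracts the local affinity by the factor $\sum_c\sqrt{p_c\,p_{\sigma(c)}}<1$, the strictness coming from the equality case of Cauchy--Schwarz together with distinctness. A single rigid-stabilizer element is active only along some rays, so to push the coefficient all the way to $0$ one must compose such contractions at many \emph{nested} scales, so that $\mu_p$-almost every infinite ray meets an unbounded number of active vertices and accumulates a product $\rho^n\to0$ of such factors. Building these nested active elements out of the mere nontriviality of $\rist_w(G)$, rather than from ready-made transpositions, is the delicate point: subexponential boundedness guarantees quasi-invariance of $\mu_p$ and keeps the cocycle under control, while level-transitivity spreads the rigid-stabilizer elements across the tree. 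The remaining care is the routine uniform-integrability bookkeeping needed to pass to the limit of the affinities.
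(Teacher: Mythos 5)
The paper offers no proof of this proposition beyond the sentence ``From \cite{DuGr-Koop} (Corollary 3 and Lemma 4) we deduce the following,'' so your reconstruction has to be judged on its own terms. Most of it is sound and is surely close to the intended deduction: the identity $(\kappa_p(g)\xi_A,\xi_A)=\int_A\sqrt{\dd\mu_p(g^{-1}x)/\dd\mu_p(x)}\,\dd\mu_p(x)$ for $\supp(g)\subset A$, the Cauchy--Schwarz bound by $\mu_p(A)$, the decomposition of a clopen $A$ into cones $\partial T_{v_1},\ldots,\partial T_{v_k}$ at a common level, and the combination step (off-diagonal terms vanish because $\kappa_p(g_n)\xi_{\partial T_{v_i}}$ stays supported in $\partial T_{v_i}$) are all correct; and, like the paper, you defer the analytic heart to \cite{DuGr-Koop}.

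The step that does not go through as written is the self-similarity reduction to the case $A=\partial T$. Identifying $\partial T_v$ with $\partial T$ replaces the acting group $G$ by the restriction of $\rist_v(G)$ to $T_v$. That restriction is indeed subexponentially bounded and has nontrivial rigid stabilizers at every vertex of $T_v$ (since $\rist_w(G)\leqslant\rist_v(G)$ whenever $w$ lies below $v$), but it need \emph{not} be level-transitive on $T_v$: weak branchness of $G$ imposes nothing on $\rist_v(G)$ beyond nontriviality, and nothing rules out that every element of $\rist_v(G)$ fixes every child of $v$. So the reduced action need not be weakly branch, and a whole-boundary result stated for weakly branch groups --- which is how you propose to read Corollary 3 of \cite{DuGr-Koop} --- cannot be invoked for it. The tension is visible inside your own sketch: you reduce to a group that may fail level-transitivity, and then describe the imported mechanism as needing level-transitivity to ``spread the rigid-stabilizer elements across the tree.'' The repair is to use the cited results in their localized form, which is what the paper's deduction must rely on: for each vertex $v$ one needs (and \cite{DuGr-Koop}, via the cocycle computation of Lemma 4 and the contraction statement of Corollary 3, supplies) a sequence $g_n\in\rist_v(G)$ --- elements of $G$ itself, supported in $\partial T_v$ --- with $(\kappa_p(g_n)\xi_{\partial T_v},\xi_{\partial T_v})\to 0$, the construction using only nontriviality of the rigid stabilizers below $v$, subexponential boundedness, and distinctness of the $p_i$. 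With that per-cone statement in hand, your decomposition-and-summation argument finishes the proof, and the whole-boundary detour, with its unjustified transfer of hypotheses, is unnecessary.
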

 \noindent
 Using results of \cite{DuGr-Koop} we show:
 \begin{Prop} Let  $G$ be a subexponentially bounded weakly branch group acting on a $d$-regular rooted tree and $p_1,p_2,\ldots,p_d\in (0,1)$ such that $p_i$ are pairwise distinct and $\sum\limits_{i=1}^dp_i=1$. Then the action of $G$ on $(\partial T,\mu_p)$ is measure contracting.
 \end{Prop}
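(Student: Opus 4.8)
The plan is to derive the measure contracting property as a fairly direct consequence of Proposition \ref{PropGn}, by reinterpreting the inner product in \eqref{EqGn} as an $L^1$-estimate on a Radon--Nikodym derivative and then invoking Markov's inequality. First I would reduce to the case of a clopen set. Since $\partial T$ is a Cantor set and $\mu_p$ is a Bernoulli measure, the clopen cylinder sets are dense in the measure algebra, so given an arbitrary measurable $A\subset\partial T$ and $M,\epsilon>0$ I would fix a clopen set $\widetilde A$ with $\mu_p(A\triangle\widetilde A)<\epsilon/3$. All estimates will be carried out for $\widetilde A$ and transported back to $A$ at the end.

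For the clopen set $\widetilde A$ I would invoke Proposition \ref{PropGn} to obtain elements $g_n\in G$ with $\supp(g_n)\subset\widetilde A$ and $(\kappa_p(g_n)\xi_{\widetilde A},\xi_{\widetilde A})\to 0$. The key observation is that, since $\supp(g_n)\subset\widetilde A$ and $g_n$ is a bijection fixing $\partial T\setminus\widetilde A$ pointwise, $g_n$ maps $\widetilde A$ onto itself; hence $\xi_{\widetilde A}(g_n^{-1}x)=1$ for every $x\in\widetilde A$, and unwinding the definition of the Koopman representation gives
\[
(\kappa_p(g_n)\xi_{\widetilde A},\xi_{\widetilde A})=\int_{\widetilde A}\sqrt{\tfrac{\dd\mu_p(g_n^{-1}x)}{\dd\mu_p(x)}}\,\dd\mu_p(x).
\]
Thus \eqref{EqGn} says precisely that the nonnegative function $x\mapsto\sqrt{\tfrac{\dd\mu_p(g_n^{-1}x)}{\dd\mu_p(x)}}$ tends to $0$ in $L^1(\widetilde A,\mu_p)$.

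At this point I would set $g:=g_n^{-1}$ for a suitably large $n$, so that $\tfrac{\dd\mu_p(g(x))}{\dd\mu_p(x)}=\tfrac{\dd\mu_p(g_n^{-1}x)}{\dd\mu_p(x)}$ is exactly the quantity just shown to be small in $L^1(\widetilde A,\mu_p)$. By Markov's inequality,
\[
\mu_p\Big(\big\{x\in\widetilde A:\sqrt{\tfrac{\dd\mu_p(g(x))}{\dd\mu_p(x)}}\geqslant M^{-1}\big\}\Big)\leqslant M\int_{\widetilde A}\sqrt{\tfrac{\dd\mu_p(g_n^{-1}x)}{\dd\mu_p(x)}}\,\dd\mu_p(x)\xrightarrow[n\to\infty]{}0,
\]
so for $n$ large the exceptional set $E_n:=\{x\in\widetilde A:\sqrt{\tfrac{\dd\mu_p(g(x))}{\dd\mu_p(x)}}\geqslant M^{-1}\}$ has measure $<\epsilon/3$. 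It then remains to verify the two conditions of Definition \ref{DefMeasContr}. Condition $1)$ follows because $\supp(g)=\supp(g_n)\subset\widetilde A$, whence $\supp(g)\setminus A\subset\widetilde A\setminus A$ has measure $<\epsilon/3<\epsilon$. Condition $2)$ follows from the inclusion $\{x\in A:\sqrt{\tfrac{\dd\mu_p(g(x))}{\dd\mu_p(x)}}\geqslant M^{-1}\}\subseteq(A\setminus\widetilde A)\cup E_n$, which has measure at most $\epsilon/3+\epsilon/3<\epsilon$; taking complements in $A$ yields the desired lower bound $\mu_p(A)-\epsilon$.

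I expect no serious obstacle, since the substantive input is Proposition \ref{PropGn}. The only points requiring care are the identification of \eqref{EqGn} with an $L^1$-norm of the Radon--Nikodym derivative, which hinges on the relation $g_n(\widetilde A)=\widetilde A$ forced by $\supp(g_n)\subset\widetilde A$, and the matching of the two forms of the cocycle, namely $\tfrac{\dd\mu_p(g^{-1}x)}{\dd\mu_p(x)}$ occurring in the representation versus $\tfrac{\dd\mu_p(gx)}{\dd\mu_p(x)}$ occurring in Definition \ref{DefMeasContr}; this discrepancy is resolved by replacing $g_n$ with $g_n^{-1}$, after which the remaining work is routine $\epsilon$-accounting.
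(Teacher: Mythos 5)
Your proposal is correct and takes essentially the same approach as the paper: both invoke Proposition \ref{PropGn}, identify $(\kappa_p(g_n)\xi_{A},\xi_{A})$ with $\int_{A}\sqrt{\tfrac{\dd\mu_p(g_n^{-1}x)}{\dd\mu_p(x)}}\,\dd\mu_p(x)$ for clopen $A$, and then bound the exceptional set, your Markov inequality being exactly the direct (contrapositive) form of the paper's argument by contradiction. The only cosmetic differences are that you take the inverses $g_n^{-1}$ as the contracting elements, thereby avoiding the paper's symmetry step $(\kappa_p(g_n)\xi_A,\xi_A)=(\kappa_p(g_n^{-1})\xi_A,\xi_A)$, and that you spell out the $\epsilon/3$ bookkeeping behind the paper's one-line reduction to clopen sets.
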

 \begin{proof} Let $A$ be a measurable subset of $(\partial T,\mu_p)$. Since clopen sets approximate all measurable subsets of $(\partial T,\mu_p)$ by measure arbitrarily well to show that Definition \ref{DefMeasContr} is satisfied we may assume that $A$ is clopen. Let $g_n$ be a sequence of elements from Proposition \ref{PropGn}. Clearly, for large enough $n$ condition $1)$ from Definition \ref{DefMeasContr} is satisfied for $g_n$. Assume that for some $\epsilon, M>0$ for all $n$ condition $2)$ does not hold. Set $$B_n=\{x\in A:\sqrt{\tfrac{\dd\mu_p(g_n(x))}{\dd\mu_p(x)}}> M^{-1}\}.$$ Then $\mu_p(B_n)\geqslant \epsilon$ for all $n$  and we have:
 $$(\kappa_p(g_n)\xi_A,\xi_A)=(\kappa_p(g_n^{-1})\xi_A,\xi_A)\geqslant \int\limits_{B_n} \sqrt{\tfrac{\dd\mu_p(g_n(x))}{\dd\mu_p(x)}}\dd\mu_p(x)\geqslant M^{-1}\epsilon.$$ This contradicts to \eqref{EqGn}. It follows that for large enough $n$ condition $2)$ from Definition \ref{DefMeasContr} is also satisfied for $g_n$. This finishes the proof.
 \end{proof}


\end{document}